\theoremstyle{plain}
\newtheorem{theorem}{Theorem}
\newtheorem{corollary}{Corollary}
\newtheorem{lemma}{Lemma}
\newtheorem{proposition}{Proposition}
\theoremstyle{definition}
\newtheorem{remark}{Remark}
\newcommand{\acts}{\curvearrowright}
\newcommand{\RR}{\mathbb{R}}
\newcommand{\ZZ}{\mathbb{Z}}
\newcommand{\SL}{\mathsf{SL}}
\newcommand{\PLFp}{\mathsf{PLF_+}}
\begin{document}

\title{Free subgroups acting properly discontinuously}

\author{Zoran \v{S}uni\'c}
\address{Department of Mathematics, Texas A\&M University, MS-3368, College Station, TX 77843-3368, USA}
\email{sunic@math.tamu.edu}
\thanks{This material is based upon work supported by the National Science Foundation under Grant No. DMS-1105520. }

\begin{abstract}
Given an action of a group $G$ on a topological space $X$, we establish a necessary and sufficient condition for the existence of a free subgroup $F$ of rank 2 of $G$ acting properly discontinuously on at least one nonempty, open, $F$-invariant subspace of $X$. In the case of a discrete topology (group action on a set $X$), the condition simply detects free subgroups of rank 2 acting freely on some orbit.
\end{abstract}

\keywords{free subgroups, properly discontinuous actions}

\subjclass[2010]{20E05,37E10}

\dedicatory{Dedicated to the memory of Prof.~\'{G}.~\v{C}upona}

\maketitle


\section{Introduction}

We provide a criterion (Condition (ii) in Theorem~\ref{t:main}) for the existence of free subgroups acting properly discontinuously on an open subspace. If the criterion is satisfied one can write down explicitly generators of such a free subgroup (in terms of the elements satisfying the criterion).

\begin{theorem}\label{t:main}
Let $G \acts X$ be a left action of a group $G$ on a topological space $X$. The following conditions are equivalent.

(i) $G$ contains a free subgroup $F$ of rank 2 such that there exists a nonempty, open, $F$-invariant subspace $Z\subseteq X$ on which $F$ acts properly discontinuously (see Remark~\ref{r:properly}).

(ii) There exist nonempty, open subsets $A$ and $Y$ of $X$, with $A \subseteq Y \subseteq X$, and elements $a,a_0,a_1,a_2$ in $G$ such that
\begin{itemize}
 \item[] $Y = A \cup aA$,
 \item[] $a_0A$, $a_1A$, and $a_2A$ are disjoint,
 \item[] $Y$ is invariant under $a,a_0,a_1,a_2$ (see Remark~\ref{r:invariant}).
\end{itemize}

(iii) There exist nonempty, open subsets $A$ and $Y$ of $X$, with $A \subseteq Y \subseteq X$, such that, for every $n \geq 3$, there exist elements
$a,a_0,a_1,\dots,a_{n-1}$ in $G$ such that
\begin{itemize}
 \item[] $Y = A \cup aA$,
 \item[] $a_0A,a_1A,\dots,a_{n-1}A$ are disjoint,
 \item[] $Y$ is invariant under $a,a_0,a_1,\dots,a_{n-1}$.
\end{itemize}

(iv) there are elements $f_1$ and $f_2$ in $G$ and nonempty, open, disjoint subsets $U_0,U_1^+,U_1^-,U_2^+,U_2^-$ of $X$ such that
\begin{alignat*}{2}
 & f_1(U_0 \cup U_1^+ \cup U_2^+ \cup U_2^-) \subseteq U_1^+, \qquad && f_1^{-1}(U_0 \cup  U_1^- \cup U_2^+ \cup U_2^-) \subseteq U_1^- \\
 & f_2(U_0 \cup U_2^+ \cup U_1^+ \cup U_1^-) \subseteq U_2^+, \qquad && f_2^{-1}(U_0 \cup  U_2^- \cup U_1^+ \cup U_1^-) \subseteq U_2^-
\end{alignat*}

\vspace{5mm}

Moreover,
\begin{itemize}
\item[] if (ii) holds, $[a,a_1^{-1}a_0]$ and $[a,a_2^{-1}a_0]$ freely generate a copy of $F_2$.
\item[] if (iii) holds, with $n=4$, $a_0aa_1^{-1}$ and $a_2aa_3^{-1}$ freely generate a copy of $F_2$,
\item[] if (iii) holds, with $n \geq 2k+1$, for some $k \geq 2$, then $F=\langle a_1aa_2^{-1}, \dots,a_{2k-1}aa_{2k}^{-1} \rangle$ is a free group of rank $k$ acting properly discontinuously on the open, $F$-invariant, subspace $Fa_0A$.
\item[] if (iii) holds, with $n=5$, then (iv) holds with $f_1=a_1aa_2^{-1}$, $f_2=a_3aa_4^{-1}$, $U_0=a_0A$, $U_1^+=a_1A$, $U_1^-=a_2A$, $U_2^+=a_3A$, and $U_2^-=a_4A$.
\end{itemize}
\end{theorem}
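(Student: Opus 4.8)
The plan is to prove the cycle $(iv)\Rightarrow(i)\Rightarrow(iii)\Rightarrow(ii)\Rightarrow(i)$ together with $(iii)\Rightarrow(iv)$ (the implication $(iii)\Rightarrow(ii)$ being the special case $n=3$), so that (i)--(iv) become equivalent, and to read the four displayed ``Moreover'' assertions off the ping-pong constructions used along the way. The only real tool is the table-tennis lemma, used in two forms: the classical one that detects a free group from suitable ``attracting'' tables, and the refinement in which a further set disjoint from all the tables forces a properly discontinuous action. I will use the refinement in the shape: \emph{if $W$ is nonempty open and $gW\cap W=\emptyset$ for every nontrivial $g$ in a subgroup $H\le G$, then the $H$-translates of $W$ are pairwise disjoint and $H$ acts properly discontinuously on the open, nonempty, $H$-invariant set $HW$} (each $gW$ is then a neighbourhood of its points meeting no other $H$-translate of $W$), so that condition~(i) holds with $Z=HW$; the precise sense of ``properly discontinuous'' here is that of Remark~\ref{r:properly}, and throughout I treat the $G$-action as being by homeomorphisms so that translates of open sets are open.

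For $(iv)\Rightarrow(i)$ I will run ping-pong with tables $U_1^{\pm},U_2^{\pm}$ and auxiliary set $U_0$: the four displayed inclusions say precisely that $f_i^{\pm1}$ carries $U_0$ and every table except $U_i^{\mp}$ into $U_i^{\pm}$, so induction on the length of a reduced word $w$ in $f_1,f_2$ gives $wU_0\subseteq$ one of the four tables, hence disjoint from $U_0$; thus $\langle f_1,f_2\rangle$ is free of rank $2$ and, by the refinement, acts properly discontinuously on $\langle f_1,f_2\rangle U_0$. The same computation yields the $(iii)$-assertions. The observation behind all of them is: for $j\neq i$, $a_i^{-1}a_jA$ is disjoint from $A$ and contained in the $a_i$-invariant set $Y$, hence lies in $Y\setminus A$, which by $Y=A\cup aA$ and the $a$-invariance of $Y$ (so also $Y=A\cup a^{-1}A$) is contained in $aA\cap a^{-1}A$; applying $a^{-1}$ or $a$ returns it into $A$. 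With $f_i=a_{2i-1}aa_{2i}^{-1}$ this gives $f_i(a_jA)\subseteq a_{2i-1}A$ for $j\neq 2i$ and $f_i^{-1}(a_jA)\subseteq a_{2i}A$ for $j\neq 2i-1$, i.e.\ the multi-generator ping-pong data on the disjoint tables $a_1A,\dots,a_{2k}A$ with auxiliary table $a_0A$; hence $F=\langle a_1aa_2^{-1},\dots,a_{2k-1}aa_{2k}^{-1}\rangle$ is free of rank $k$ and acts properly discontinuously on $Fa_0A$, which is the third item. The $n=4$ item is the four-table, two-generator version of the same computation (giving freeness, but not a priori proper discontinuity, as the statement reflects), and $(iii)\Rightarrow(iv)$ with $n=5$ is the direct check of the four inclusions in~(iv) from the same observation, with $f_1=a_1aa_2^{-1}$, $f_2=a_3aa_4^{-1}$, $U_0=a_0A$ and $U_1^{\pm},U_2^{\pm}=a_1A,a_2A,a_3A,a_4A$.

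For $(i)\Rightarrow(iii)$ I will first pass to a clean situation. By Remark~\ref{r:properly} there are $z\in Z$ and an open $V$ with $z\in V\subseteq Z$ and $S:=\{f\in F:fV\cap V\neq\emptyset\}$ finite. Choosing inside $F\cong F_2$ a free subgroup of rank $2$ all of whose nontrivial elements have $F$-word-length exceeding $\max_{s\in S}|s|$ (e.g.\ $F'=\langle x^N,\,yx^Ny^{-1}\rangle$ with $N$ large, where $F=\langle x,y\rangle$), we get $F'\cap S=\{1\}$, so the $F'$-translates of $V$ are pairwise disjoint. Put $Y=F'V$, which is open, nonempty, $F'$-invariant. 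Writing $F'=\langle u,v\rangle$, let $T$ be $1$ together with all reduced words in $u^{\pm1},v^{\pm1}$ beginning with $u$ or $u^{-1}$, and $A=\bigcup_{t\in T}tV\subseteq Y$ (open, nonempty). Two one-line facts in the free group finish it: $T\cup uT=F'$ (if $g$ begins with $v^{\pm1}$ then $u^{-1}g$ begins with $u^{-1}$), whence $Y=A\cup uA$; and $T\cap v^mT=\emptyset$ for every $m\neq0$ (every element of $v^mT$ begins with $v^{\pm1}$), which with the disjointness of the $F'$-translates of $V$ makes $A,vA,v^2A,\dots$ pairwise disjoint. Taking $a=u$, $a_0=1$, $a_i=v^i$ then verifies~(iii) for all $n$ simultaneously (in particular $(i)\Rightarrow(ii)$).

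It remains to prove $(ii)\Rightarrow(i)$, which also gives the first ``Moreover'' item. Put $t_i=a_0^{-1}a_i$ ($i=1,2$); then $a_i^{-1}a_0=t_i^{-1}$, so the proposed generators are $[a,t_1^{-1}]=at_1^{-1}a^{-1}t_1$ and $[a,t_2^{-1}]$, while $A$, $t_1A$, $t_2A$ are pairwise disjoint (being the $a_0^{-1}$-translates of $a_0A$, $a_1A$, $a_2A$), lie in $Y$, and $Y$---which equals $A\cup aA$ and, being $a$-invariant, also $A\cup a^{-1}A$---is invariant under $a$, $t_1$, $t_2$. Thus $a^{\pm1}$ carries $Y\setminus A$ into $A$, while $t_i^{\pm1}$ carries $A$ into $Y\setminus A$, landing in the mutually disjoint pieces $t_i^{\pm1}A$. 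Reading $[a,t_i^{-1}]$ as a ``hyperbolic'' element assembled from these two ``contractions'', I will exhibit explicit ping-pong tables for the pair $[a,t_1^{-1}],[a,t_2^{-1}]$---built from $A$, $Y\setminus A$, and the translates $t_i^{\pm1}A$ and $at_i^{\pm1}A$, together with one more set disjoint from all of them---verify the ping-pong inclusions, and conclude that $[a,t_1^{-1}]$ and $[a,t_2^{-1}]$ freely generate a copy of $F_2$ acting properly discontinuously on the orbit of that extra set, i.e.\ condition~(i). Choosing the right tables from the comparatively meagre data of~(ii) (only three disjoint translates) and checking the inclusions is the step I expect to be the main obstacle; the rest is the routine bookkeeping of ping-pong and of reduced words in $F_2$. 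Once $(ii)\Rightarrow(i)$ is in hand the cycle closes, $(iii)\Rightarrow(iv)$ folds~(iv) in, and all four ``Moreover'' assertions have been obtained en route.
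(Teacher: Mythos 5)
Your cycle does establish (i)$\Leftrightarrow$(iii)$\Leftrightarrow$(iv) together with (iii)$\Rightarrow$(ii), and those parts are essentially sound (your (i)$\Rightarrow$(iii) even works under a weaker notion of proper discontinuity by passing to the subgroup $\langle x^N,\,yx^Ny^{-1}\rangle$; this detour is unnecessary under the paper's adopted definition, which directly supplies a neighborhood $U$ with $(F\setminus 1)U\cap U=\emptyset$, but it is not wrong). The genuine gap is the implication out of (ii), which is the heart of the theorem: for (ii)$\Rightarrow$(i) you only announce ping-pong tables for $[a,t_1^{-1}]$ and $[a,t_2^{-1}]$ built from $A$, $Y\setminus A$, $t_i^{\pm1}A$, $at_i^{\pm1}A$ ``together with one more set,'' and you yourself flag the choice and verification of these tables as the main obstacle. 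As written, neither (ii)$\Rightarrow$(i) nor the first ``Moreover'' item is proved, so the four conditions are not shown to be equivalent.

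The missing idea is a bootstrap you already have in hand. Your ``observation'' is exactly the paper's Inequality Lemma: for $i\neq j$, $a_\ell a^{\pm1}a_j^{-1}(a_iA)\subseteq a_\ell A$. Apply it with $\ell=j$: under (ii), the sets $a_2aa_2^{-1}(a_0A)$ and $a_2aa_2^{-1}(a_1A)$ are disjoint (images of disjoint sets under a bijection) and lie in $a_2A$, hence are disjoint from $a_0A$ and $a_1A$; moreover $a_2aa_2^{-1}a_0$ and $a_2aa_2^{-1}a_1$ still leave $Y$ invariant. Thus three disjoint translates of $A$ become four, and inductively $n$ become $n+1$, giving (ii)$\Rightarrow$(iii) for all $n$ with the same $A$ and $Y$; after that your own (iii)-machinery delivers (iv) and (i), so no separate commutator ping-pong is needed. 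The first ``Moreover'' item then also falls out: run the $n=4$ case with $a_1$, $a_2$, $a_0a^{-1}a_0^{-1}a_1$, $a_0a^{-1}a_0^{-1}a_2$ in the roles of $a_0,a_1,a_2,a_3$ to see that $a_0a^{-1}a_0^{-1}a_1aa_1^{-1}$ and $a_0a^{-1}a_0^{-1}a_2aa_2^{-1}$ freely generate a copy of $F_2$, and conjugate by $a_0$ to obtain $[a,a_1^{-1}a_0]$ and $[a,a_2^{-1}a_0]$. I recommend replacing your sketched (ii)$\Rightarrow$(i) with this inductive multiplication of translates.
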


\begin{remark}\label{r:onelesselement}
In Condition (ii) we may simply require that there exist elements $a,b,c$ in $G$ such that
\begin{itemize}
 \item[] $Y = A \cup aA$,
 \item[] $A$, $bA$, and $cA$ are disjoint,
 \item[] $Y$ is invariant under $a$, $b$, and $c$.
\end{itemize}
Indeed, if Condition (ii) holds then $A$, $a_0^{-1}a_1A$, and $a_0^{-1}a_2A$ are disjoint. In other words, we can always assume/choose $a_0=1$. The same remark is valid for Condition (iii). We keep $a_0$ in its general form both for conceptual and notational convenience.
\end{remark}

\begin{remark}\label{r:properly}
A properly discontinuous action of a group $H$ on a topological space $Z$ is an action $H \acts Z$ by homeomorphisms such that, for every point $z$ in $Z$, there exists an open neighborhood $U_z$ of $z$ such that $hU_z \cap U_z =\emptyset$, for all $h$ in $H$ such that $h \neq 1$ (see, for instance, Munkres~\cite{munkres:topology2nd}). Note that there are several nonequivalent definitions of properly discontinuous actions in the literature, which become equivalent only under various additional conditions (imposed on the groups or on the spaces on which they act).

The adopted definition implies that properly discontinuous actions are free. Moreover, in case the space $Z$ is discrete, properly discontinuous actions are precisely the free actions on the set $Z$.
\end{remark}

\begin{remark}\label{r:invariant}
We explicitly state that, here, invariance means equality, not just an inclusion. To be precise, by a subspace (set) $Y$ invariant under the action of an element $h$ (subgroup $H$) we mean a subspace (set) such that $hY=Y$ ($HY=Y$).
\end{remark}

\begin{remark}\label{r:schottky}
A group $F=\langle f_1,f_2\rangle$ satisfying Condition (iv) is called a \emph{quasi-Schottky} group by Margulis~\cite{margulis:ghys-circle}. Such a group acts properly discontinuously on the nonempty, open, $F$-invariant subspace $FU_0$ of $X$ (the implication (iv)$\implies$(i) is obvious). Groups satisfying some variation of this condition (which is itself a variation of the Klein Ping-Pong Lemma) provide a standard tool for finding free subgroups and have been used over and over again in almost all situations in which presence of free groups needs to be detected, from the classical work of Schottky, Klein, and their contemporaries, to the Tits Alternative for linear groups over fields of characteristic 0~\cite{tits:alternative}, to its more recent analogs for mapping class groups (Ivanov~\cite{ivanov:ta-mapping} and McCarthy~\cite{mccarthy:ta-mapping}, independently), and outer automorphism groups (Bestvina, Feighn, Handel~\cite{bestvina-f-h:ta-outer1,bestvina-f-h:ta-outer2}), and to the Ghys Alternative for groups acting on the circle (Margulis~\cite{margulis:ghys-circle}), to name just a few. Recall that Klein Ping-Pong Lemma, in one of its usual forms, states that if $G \acts X$ is a group action such that there exist two elements $a$ and $b$ in $G$ and two distinct subsets $A$ and $B$ of $X$ such that, for all nonzero $n$, $a^n(B) \subseteq A$ and $b^n(A) \subseteq B$, then $a$ and $b$ freely generate a free subgroup of $G$ of rank 2. One small difficulty with this condition is that it can only be used to verify that there is a free subgroup once we actually have generators for such a subgroup in hand (not to mention that we need to ``control'' all nonzero powers of $a$ and $b$). The flexibility Condition (ii) provides is that we do not need to know, or even suspect, in advance, any generators of a free subgroup; all we need is three elements $a$, $b$ and $c$ (as in Remark~\ref{r:onelesselement}) with fairly simple dynamics, and these three elements may even be (as we will see in examples) of finite order.
\end{remark}

By utilizing the left regular action (which is free) we obtain the following corollary.

\begin{corollary}\label{c:f2}
A group $G$ contains a free subgroup of rank 2 if and only if there exist a subset $A$ of $G$ and elements $a,b,c$ in $G$ such
that
\begin{itemize}
 \item[] $G = A \cup aA$, and
 \item[] $A$, $bA$, and $cA$ are disjoint.
\end{itemize}
\end{corollary}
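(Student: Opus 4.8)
The plan is to apply Theorem~\ref{t:main} to the left regular action $G \acts G$ (by left multiplication), given the discrete topology. In a discrete space every subset is open, and by Remark~\ref{r:properly} the properly discontinuous actions are precisely the free ones; since the regular action is free, so is the action of every subgroup of $G$ on $G$. The two implications are then handled separately, and essentially all the work is in the ``only if'' direction.

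For ``if'', suppose we are given $A \subseteq G$ and $a,b,c \in G$ with $G = A \cup aA$ and $A$, $bA$, $cA$ disjoint; note $A \neq \emptyset$, as otherwise $G = A \cup aA = \emptyset$. I would simply set $Y = G$. Left translation being a bijection of $G$, the set $Y$ is invariant under $a$, $b$, $c$ in the sense of Remark~\ref{r:invariant}, and $A \subseteq Y \subseteq G$ with $Y = A \cup aA$. Thus the hypotheses of Condition~(ii) of Theorem~\ref{t:main}, in the form of Remark~\ref{r:onelesselement}, hold for $G \acts G$, so Condition~(i) holds and $G$ contains a free subgroup of rank 2; by the ``Moreover'' clause (with $a_0 = 1$, $a_1 = b$, $a_2 = c$) the elements $[a, b^{-1}]$ and $[a, c^{-1}]$ freely generate one.

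For ``only if'', the content is to exhibit such data, which amounts to verifying Condition~(ii) with $Y = G$ for the regular action. Let $F = \langle f_1, f_2 \rangle \leq G$ be free of rank 2, fix a right transversal $T$ of $F$ in $G$ so that $G = \bigsqcup_{t \in T} Ft$, and write each $g \in G$ uniquely as $g = f_1^{\,n} w\, t$, where $t \in T$, the $F$-component $f_1^{\,n}w \in F$ is written as a reduced word with its leading $f_1$-block $f_1^{\,n}$ split off, and $w$ is a reduced word in $f_1^{\pm 1}, f_2^{\pm 1}$ whose first letter (if any) is not $f_1^{\pm 1}$; put $\nu(g) = n$. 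I would then take
\[
a = f_1, \qquad b = f_1 f_2 f_1, \qquad c = f_1 f_2^{-1} f_1, \qquad A = \{\, g \in G : \nu(g)\ \text{is even} \,\}.
\]
Since $\nu(f_1^{-1}g) = \nu(g) - 1$ for all $g$, every element lies in $A$ or in $f_1 A$, so $G = A \cup aA$, and $A \neq \emptyset$. For $g \in A$ (so $\nu(g) = n$ is even, hence $n + 1 \neq 0$), the $F$-component of $bg$ is the reduced word $f_1 f_2 f_1^{\,n+1} w$, which begins with $f_1 f_2$, so $\nu(bg) = 1$; likewise $\nu(cg) = 1$ and the $F$-component of $cg$ begins with $f_1 f_2^{-1}$. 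Hence $bA$ and $cA$ are disjoint from $A$ (odd versus even value of $\nu$), and $bA \cap cA = \emptyset$ because $bg = cg'$ (with $b,c \in F$) forces $g$ and $g'$ into the same right coset $Ft$ and hence equal $F$-components, which is impossible since one begins $f_1 f_2$ and the other $f_1 f_2^{-1}$.

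I expect the only delicate point to be the word bookkeeping in the last step: one has to check that left-multiplying $g \in A$ by $b$ or $c$ produces no accidental cancellation in $F$. This is exactly why $b$ and $c$ are chosen as $f_1$-sandwiches of $f_2^{\pm 1}$ and why $A$ is cut out by a parity condition --- the parity guarantees $n + 1 \neq 0$, so the inner $f_1$ of $b$ (resp.\ $c$) abuts the leading block $f_1^{\,n}$ of $g$ without collapsing it, making the reduced form, and with it $\nu(bg) = \nu(cg) = 1$, immediate. Everything else is a direct appeal to Theorem~\ref{t:main} for the regular action.
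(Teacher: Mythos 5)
Your proof is correct, and the ``if'' direction coincides with the paper's (apply Theorem~\ref{t:main}, (ii)$\implies$(i), to the left regular action of $G$ on itself with the discrete topology, taking $Y=G$). In the ``only if'' direction, however, you take a genuinely different route to the witnesses. The paper simply reuses its proof of (i)$\implies$(ii) for the free action of $F=\langle a,b\rangle$ on $G$: with $A'$ the set of reduced words starting in $a^{\pm1}$ one has $F=A'\cup aA'$ and $A'$, $bA'$, $b^2A'$ disjoint, and then $A=A'T$ for a right transversal $T$ works with the elements $a$, $b$, $b^2$, the transfer of disjointness from $F$ to $G$ having already been established there. You instead invent a new decomposition: $A=\{\nu\ \text{even}\}$ where $\nu(g)$ is the exponent of the leading $f_1$-block of the $F$-component of $g$, with $a=f_1$, $b=f_1f_2f_1$, $c=f_1f_2^{-1}f_1$, and you verify $G=A\cup aA$ and the disjointness by hand; your coset argument ($bg=cg'$ forces equal transversal parts and equal $F$-components) is exactly the freeness/transfer step that the paper packages once in its (i)$\implies$(ii) proof. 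Both arguments are sound; the paper's is shorter and more canonical because the candidate subset of $F$ (words starting in $a^{\pm1}$) and the elements $1,b,b^2$ require no anti-cancellation bookkeeping, whereas your parity trick and the $f_1$-sandwiched conjugates $f_1f_2^{\pm1}f_1$ are chosen precisely to rule out cancellation and must be checked directly. Your version does have the minor merit of being self-contained (it does not lean on the internals of the (i)$\implies$(ii) proof), and it yields a different explicit triple $(a,b,c)$, illustrating the flexibility of Condition (ii).
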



\section{Examples}

\subsection{Free subgroups of $\PLFp(S^1)$}

It is known that the group $\PLFp(S^1)$ of piecewise linear, orientation preserving  homeomorphisms of the circle, with finitely many breaks, contains free subgroups of rank 2 (see, for instance, Section 4 in~\cite{brin-s:nofree}). We show that Condition (ii) may be used to reestablish this fact and to provide many explicit examples that are easy to construct.

We think of the circle as $S^1 =\RR/\ZZ=[0,1]/\{0=1\}$. Fix $n \geq 3$, let $A = [0,1/n)$,
\[
 a(t) =
 \begin{cases}
  (n-1)t + \frac{1}{n}, & 0 \leq t < \frac{1}{n} \\
  \frac{1}{n-1} t - \frac{1}{n(n-1)}, & \frac{1}{n} \leq t < 1 \\
 \end{cases},
\]
and, for $i=0,\dots,n-1$,
\[
 a_i = r^i,
\]
where $r(t) = t+\frac{1}{n}$ is the rotation by $1/n$. The rotated arcs $a_iA=r^iA=[i/n,(i+1)/n)$, for $i=0,\dots,n-1$, are disjoint.
Moreover, $a$ is a homeomorphism of order two, exchanging the intervals $A=[0,1/n)=a[1/n,1)$ and $aA=[1/n,1)$, after appropriate rescaling. Therefore $S^1=A \cup aA$, Condition (ii) is satisfied, and we may write down explicitly generators of a free group of rank 2 in terms of $a$ and $r$ (for instance $[a,r]$ and $[a,r^2]$, as suggested by the last part of Theorem~\ref{t:main}). One may readily construct many similar
examples of free subgroups in $\PLFp(S^1)$ (since it is so easy to compress/expand and rotate arcs around to either avoid each other or to cover
the entire circle, as we please).

If $k \geq 2$ and we set $n=2k+1$, a free group $F_k$ of rank $k$ is freely generated, according to Theorem~\ref{t:main}, by any set of $k$ generators of the form
\[
  a_{i_1}aa_{i_2}^{-1},~a_{i_3}aa_{i_4}^{-1},~\dots,~a_{i_{2k-1}}aa_{i_{2k}}^{-1},
\]
where $i_1,i_2,\dots,i_{2k}$ are distinct indices from $\{0,\dots,2k\}$. The free group $F$ obtained in this way acts freely on $Fa_jA$, where $j$ is the remaining index, the one that is not used in the construction of the generators. One straightforward choice is
\[
 s_i = a_i a a_{n-i}^{-1} = r^i a r^i, \qquad \text{ for } i=1,\dots,k
\]
The free group $F_k = \langle s_1,\dots,s_k\rangle$ acts freely on $F_kA$ (note that $F_kA$ has Lebesgue measure 1). In particular, it acts freely on the orbit of 0 and this action leads to a left order on the free group $F_k$ that extends the usual lexicographic order on the positive monoid $\{s_1,\dots,s_k\}^*$ based on $s_1<s_2<\dots<s_k$~\cite{sunic:free-lex}. Other choices of generators lead to other left orders on $F_k$.


\subsection*{The examples of Bennett}

In his expository work, Pierre de la Harpe~\cite[page 30]{harpe:ggt} describes the noteworthy examples of free subgroups of $\PLFp(S^1)$, one for each finite rank $k \geq 2$, constructed by Bennett~\cite{bennett:free}. While the argument given by Bennett in his original article is clear and straightforward application of the classical Klein Ping-Pong Lemma, the generators are presented to the reader ``out of the blue''. We will show how one naturally arrives at the generators of the free subgroups of Bennet by making a few choices that differ slightly from the choices we made in the previous subsection.

Set, again, $n=2k+1$, and let $A=[0,1/(2k+1))$ and the homeomorphisms $a$ and $r$ (rotation by $1/(2k+1)$) be defined as before. Further, let
\begin{alignat*}{6}
 &b_0=1 ,     \qquad &&b_2= \ell, \qquad       &&b_4 = \ell^2,       \qquad &&\dots, \qquad &&b_{2k-2} = \ell^{k-1} \\
 &b_1=r^{-1}, \qquad &&b_3= \ell r^{-1}, \qquad &&b_5 = \ell^2 r^{-1}, \qquad &&\dots, \qquad &&b_{2k-1} = \ell^{k-1}r^{-1},
\end{alignat*}
where $\ell$ is the rotation by $1/k$. Each of the $2k$ homeomorphisms $b_0,\dots,b_{2k-1}$ is a rotation, and these $2k$ rotations translate $A$ to disjoint arcs.

For instance, for $k=3$, the disjoint translates of $A$ by $a_i=r^i$, $i=0,\dots,2k$, (as in the previous subsection) and by $b_j$, $j=0,\dots,2k-1$, (the choices we make in this subsection) are shown in Figure~\ref{f:bennet-translates} (notice the three gaps of size $1/21$ between $b_0A$ and $b_3A$, between $b_2A$ and $b_5A$, and between $b_4A$ and $b_1A$).
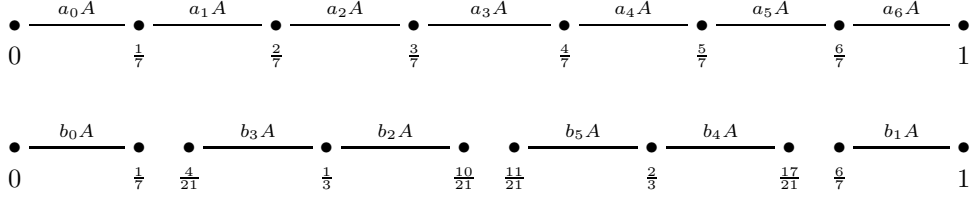
\begin{figure}[!ht]
\xymatrix@C=1pt@R=15pt{
 \bullet \ar@{-}[rrrrrr]^{a_0A} \ar@{}[d]|{\textstyle{0}} &&& &&&
 \bullet \ar@{-}[rrrrrr]^{a_1A} \ar@{}[d]|{\frac{1}{7}} &&& &&&
 \bullet \ar@{-}[rrrrrr]^{a_2A} \ar@{}[d]|{\frac{2}{7}} &&& &&&
 \bullet \ar@{-}[rrrrrr]^{a_3A} \ar@{}[d]|{\frac{3}{7}} &&& &&&
 \bullet \ar@{-}[rrrrrr]^{a_4A} \ar@{}[d]|{\frac{4}{7}} &&& &&&
 \bullet \ar@{-}[rrrrrr]^{a_5A} \ar@{}[d]|{\frac{5}{7}} &&& &&&
 \bullet \ar@{-}[rrrrrr]^{a_6A} \ar@{}[d]|{\frac{6}{7}} &&& &&&
 \bullet \ar@{}[d]|{\textstyle{1}}
 \\
 &&& &&& &&& &&& &&& &&& &&& &&& &&& &&& &&& &&& &&& &&&
 \\
 \bullet \ar@{-}[rrrrrr]^{b_0A} \ar@{}[d]|{\textstyle{0}} &&& &&&
 \bullet                        \ar@{}[d]|{\frac{1}{7}}        &&
 \bullet \ar@{-}[rrrrrr]^{b_3A} \ar@{}[d]|{\frac{4}{21}}  &&& &&&
 \bullet \ar@{-}[rrrrrr]^{b_2A} \ar@{}[d]|{\frac{1}{3}}   &&& &&&
 \bullet                        \ar@{}[d]|{\frac{10}{21}}      &&
 \bullet \ar@{-}[rrrrrr]^{b_5A} \ar@{}[d]|{\frac{11}{21}} &&& &&&
 \bullet \ar@{-}[rrrrrr]^{b_4A} \ar@{}[d]|{\frac{2}{3}}   &&& &&&
 \bullet                        \ar@{}[d]|{\frac{17}{21}}      &&
 \bullet \ar@{-}[rrrrrr]^{b_1A} \ar@{}[d]|{\frac{6}{7}}   &&& &&&
 \bullet \ar@{}[d]|{\textstyle{1}}
 \\
 &&& &&& &&& &&& &&& &&& &&& &&& &&& &&& &&& &&& &&& &&&
}
\caption{Translates of $A$ by $a_i$, $i=0,\dots,6$ and $b_j$, $j=0,\dots,5$}
\label{f:bennet-translates}
\end{figure}
Moreover, the $2k$ translated arcs of length $1/(2k+1)$ leave enough room (in fact, there are $k$ gaps of size $\frac{1}{k(2k+1)}$) for another translate of $A$ to be squeezed in, say, between $b_0A = \left[0,\frac{1}{2k+1}\right)$ and $b_3A=\left[\frac{k+1}{k(2k+1)},\frac{1}{k}\right)$ (this translate cannot be by a rotation, since $A$ needs to be compressed by a factor of at least $k$). Therefore, by Theorem~\ref{t:main},
\[
 t_i = b_{2i} a b_{2i+1}^{-1} =  \ell^i a r \ell^{-i}, \qquad \text{ for } i=0,\dots,k-1,
\]
freely generate a subgroup of rank $k$ of $\PLFp(S^1)$ acting freely on the orbit of any point between $\frac{1}{2k+1}$ and $\frac{k+1}{k(2k+1)}$ (Bennet singles out the point $1/(2k)$ in his article).

For completeness, we provide explicit formulas for the homeomorphisms $s_i=r^iar^i$, $i=1,\dots,k$, from the previous subsection, and the homeomorphisms $t_j=\ell^i ar\ell^{-1}$, $j=0,\dots,k-1$, defined by Bennet. Denote $s_0=t_0=ar$, Direct calculation gives
\[
 s_0(x) = t_0(x) =
  \begin{cases}
    \frac{1}{2k} x, & 0 \leq x < \frac{2k}{2k+1} \\
    2k x - (2k-1), & \frac{2k}{2k+1} \leq x < 1.
  \end{cases}
\]
Therefore, for $i=1,\dots,k$,
\[
 s_i = s_0 \left(x+\frac{i-1}{2k+1}\right) + \frac{i}{2k+1}.
\]
and, for $j=0,\dots,k-1$,
\[
 t_j = t_0 \left(x-\frac{j}{k}\right) + \frac{j}{k}.
\]

\begin{remark}
To be pedantic, Bennett does not define his examples as groups of homeomorphisms of the circle, but rather as groups of homeomorphisms of the line. However, the groups $\langle t_0,\dots,t_{k-1}\rangle$, for $k \geq 2$, are just projections (along the projection from $\RR$ to $S^1=\RR/\ZZ$) of the groups he originally constructed.
\end{remark}

\subsection{Free subgroups of $\SL_2(\RR)$ and $\SL_2(\ZZ)$} A straightforward application of Theorem~\ref{t:main} to $\SL_2(\RR)$ quickly rediscovers the ``classics''. We see, once again, that generators for many free subgroups may be easily constructed out of elements satisfying the requirements of Condition (ii) or Condition (iii) and that such elements are often easy to find.

Consider the standard (linear) action of $\SL_2(\RR)$ on the plane $X=\RR^2$. Let $Y= X \setminus \{(0,0)\}$ and
\[
 A = \{ (x,y) \in Y \mid  x > 0,~ y \geq 0\} \cup \{ (x,y) \mid  x < 0,~y \leq 0 \}.
\]
In other words, $A$ is the union of the first and the third quadrant, including the $x$-axis, but excluding the $y$-axis (and the origin). Let
\[
 a = \begin{pmatrix} 0 & -1 \\ 1 & 0 \end{pmatrix}, ~
 a_0 = \begin{pmatrix} 1 & \alpha \\ 0 & 1 \end{pmatrix},~
 a_1 = \begin{pmatrix} 1 & 0 \\ \beta & 1 \end{pmatrix}, ~
 a_2 = \begin{pmatrix} 0 & -1 \\ 1 & \gamma \end{pmatrix}, ~
 a_3 = \begin{pmatrix} -\delta & -1 \\ 1 & 0 \end{pmatrix},
\]
where $\alpha, \beta, \gamma,\delta >0$, $\alpha\beta \geq 1$, and $\gamma\delta \geq 1$. It is easy to check that $Y=A \cup aA$ (in fact, since $a$
is the rotation by $\pi/2$, $aA=Y \setminus A$), that $a_0A=-a_0A$, $a_1A=-a_1A$, $a_2A=-a_2A$, and $a_3A=-a_3A$, and that these four sets are disjoint (this is where the conditions $\alpha\beta \geq 1$, and $\gamma\delta \geq 1$ play a role). Therefore, each of the following pairs of matrices freely generate a copy of $F_2$ in $\SL_2(\RR)$:
\begin{alignat*}{5}
 -& a_2aa_0^{-1} &&= \begin{pmatrix} 1 & -\alpha \\ -\gamma & 1+\alpha \gamma \end{pmatrix} \qquad &&\text{and} \qquad
 -&&a_3aa_1^{-1} &&= \begin{pmatrix} 1+\beta\delta & -\delta  \\ -\beta & 1 \end{pmatrix}, \\
  & a_1aa_0^{-1} &&= \begin{pmatrix} 0 & -1 \\ 1 & -(\alpha+\beta) \end{pmatrix} \qquad &&\text{and} \qquad
  &&a_3aa_2^{-1} &&= \begin{pmatrix} 0 & -1 \\ 1 & \gamma+\delta \end{pmatrix}, \\
 -& a_3aa_0^{-1} &&= \begin{pmatrix} 1 & \alpha+\delta \\ 0 & 1 \end{pmatrix} \qquad &&\text{and} \qquad
 -&&a_2aa_1^{-1} &&= \begin{pmatrix} 1 & 0  \\ \beta+\gamma & 1 \end{pmatrix}. \\
\end{alignat*}
By choosing $\alpha$, $\beta$, $\gamma$ and $\delta$ in $\ZZ$, we obtain free subgroups in $\SL_2(\ZZ)$. If either $\alpha\beta>1$ or $\delta\gamma>1$, there is enough space left in the plane for a fifth translate $a_4A$, and in that case we may easily point out free orbits. In fact, the constructed free group $F$ acts freely on $F(Y \setminus (a_0A \cup a_1A \cup a_2A \cup a_3A))$. 

Note that, since $\alpha, \beta, \gamma,\delta >0$, $\alpha\beta \geq 1$, and $\gamma\delta \geq 1$, the second and the third pair may be rewritten as
\begin{alignat*}{3}
  &\begin{pmatrix} 0 & -1 \\ 1 & -s \end{pmatrix} \qquad &&\text{and} \qquad
 &&\begin{pmatrix} 0 & -1 \\ 1 & t \end{pmatrix}, \\
  &\begin{pmatrix} 1 & u \\ 0 & 1 \end{pmatrix} \qquad &&\text{and} \qquad
 &&\begin{pmatrix} 1 & 0  \\ v & 1 \end{pmatrix}. \\
\end{alignat*}
where $s,t \geq 2$, $u,v >0$, and $uv \geq 4$. The construction is already rich enough to reestablish many classical results about free subgroups of $\SL_2(\RR)$, including the following.

\begin{proposition}
For any two non-elliptic elements $f$ and $g$ (including the possibility that $f=g$) in $\SL_2(\RR)$, there exists a conjugate $g^h$ of $g$ in $\SL_2(\RR)$ such that $\langle f, g^h \rangle$ is free of rank 2.
\end{proposition}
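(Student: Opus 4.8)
The plan is to deduce the proposition from Theorem~\ref{t:main}, applied to the standard action of $\SL_2(\RR)$ on $Y = \RR^2\setminus\{(0,0)\}$. Since $Y$ is invariant under every element of $\SL_2(\RR)$, the invariance requirements in Conditions~(ii)--(iii) are automatic. First I would note that the property ``$\langle f,g^h\rangle$ is free of rank $2$'' is unchanged if $f$ and $g^h$ are conjugated by one and the same element of $\SL_2(\RR)$; as conjugating $g$ by a further element is exactly the freedom the statement allows, I may put $f$ into any convenient conjugate form and still choose the conjugate $g':=g^h$ of $g$ freely. The goal is then to exhibit $g'$, a set $A$ and an element $a$ with $Y=A\cup aA$, and elements $a_0,a_1,a_2,a_3$ such that $a_0A,a_1A,a_2A,a_3A$ are pairwise disjoint while $a_0 a a_1^{-1}=f$ and $a_2 a a_3^{-1}=g'$; such data in particular witnesses Condition~(ii), so Condition~(iii) holds, and the ``moreover'' clause of Theorem~\ref{t:main} for $n=4$ then names $a_0aa_1^{-1}=f$ and $a_2aa_3^{-1}=g'$ as free generators, which is what we want. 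Note that once $a$, $a_0$, $a_2$ are fixed the two required identities determine $a_1=f^{-1}a_0 a$ and $a_3=(g')^{-1}a_2 a$; these are honest matrix identities, not identities modulo $\pm I$, so the group produced is genuinely free of rank $2$ rather than a central extension of $F_2$.

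For the configuration I would mimic the $\SL_2(\RR)$ example above: take $A$ to be the first and third open quadrants together with one of the bounding coordinate axes (minus the origin) and $a$ the rotation by $\pi/2$, so that $Y=A\cup aA$. Every set $cA$ with $c\in\SL_2(\RR)$ is then invariant under $-I$ and so corresponds to an arc of the projective line $\mathbb{RP}^1\cong S^1$, with $A$ corresponding to a half of $\mathbb{RP}^1$. The remaining, combinatorial step is to choose $a_0$ and $a_2$ --- as strong shears when $f$, $g$ are parabolic, as strongly loxodromic elements when they are hyperbolic --- together with the conjugator $h$, so that the four arcs $a_0A$, $a_1A=f^{-1}a_0(aA)$, $a_2A$, $a_3A=(g')^{-1}a_2(aA)$ lie in four pairwise disjoint arcs of $\mathbb{RP}^1$. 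The conjugation freedom in $g'$ is spent moving the fixed point(s) of $g'$ into general position relative to those of $f$; the freedom in the conjugate form of $f$ (for instance replacing a parabolic by a conjugate with a large off-diagonal entry) is spent creating room. Disjointness is then checked by the same kind of explicit computation of arcs of slopes used in the example, organized into cases according to the dynamical types of $f$ and $g$ (both hyperbolic, mixed, both parabolic).

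The hard part is that last verification when $f$ and $g$ are both parabolic. There one genuinely cannot use the classical disjoint-sets form of ping-pong, i.e. Condition~(iv): a parabolic element and its inverse share a fixed point on $\mathbb{RP}^1$, so there is no way to enclose $f$, $f^{-1}$, $g'$, $(g')^{-1}$ in four disjoint attracting arcs. This is exactly why the argument must go through the more flexible Conditions~(ii)/(iii), writing $f$ and $g'$ as products $a_i a a_j^{-1}$ whose individual factors $a$, $a_0$, $a_1$, $a_2$, $a_3$ can be taken far more strongly contracting than $f$ and $g$ themselves; redistributing the ``contraction budget'' in this way is what makes it possible to fit four disjoint translates of $A$ even though a direct ping-pong for $f$ and $g$ would fail. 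A subsidiary nuisance is that there are four $\SL_2(\RR)$-conjugacy classes of parabolic elements (two of trace $2$ and two of trace $-2$), so a small number of variants of the basic configuration is needed to cover all pairs of types; none of these variants is essentially harder than the one worked out in the example.
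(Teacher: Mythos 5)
Your plan is not yet a proof: in every case the conclusion is delegated to a disjointness verification that is never performed, and that verification is not ``the same kind of explicit computation'' as in the paper's example. In the example all of $a,a_0,\dots,a_3$ are chosen freely; in your scheme $a_1=f^{-1}a_0a$ and $a_3=(g')^{-1}a_2a$ are dictated by the given elements, and this rigidifies the configuration. Indeed, with your $A$ one has $aA=Y\setminus A$, hence $a_1A=Y\setminus f^{-1}(a_0A)$, so disjointness of $a_0A$ and $a_1A$ is \emph{equivalent} to $f(a_0A)\subseteq a_0A$; for your normalized parabolic $f$ this forces the closure of the arc $a_0A$ to contain the fixed point of $f$ and to lie on the correct side of it, forces $a_2A$ and $a_3A$ into the collar $f^{-1}(a_0A)\setminus a_0A$, forces $g'(a_2A)\subseteq a_2A$, and forces $g'$ to map the entire complement of that collar into $a_2A$. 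None of this is set up, let alone checked, in any of your cases, so the ``combinatorial step'' you defer is in fact the whole content of the proposition.

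Moreover, the one structural assertion you do argue --- that for two parabolics Condition (iv) ``genuinely cannot'' be satisfied, which is your stated reason for routing the argument through (ii)/(iii) --- is false. Condition (iv) asks only that the five sets be disjoint and open; their closures may meet, so a parabolic fixed point can sit on the common boundary of $U_1^+$ and $U_1^-$. Concretely, identify the lines through the origin with $\RR\cup\{\infty\}$ so that $f=\left(\begin{smallmatrix}1&1\\0&1\end{smallmatrix}\right)$ acts by $z\mapsto z+1$ and $g'=\left(\begin{smallmatrix}1&0\\c&1\end{smallmatrix}\right)$ by $z\mapsto z/(cz+1)$, and let $U_1^+,U_1^-,U_2^+,U_2^-,U_0$ be the open cones over $(\tfrac12,\infty)$, $(-\infty,-\tfrac12)$, $(0,\varepsilon)$, $(-\varepsilon,0)$, $(\varepsilon,2\varepsilon)$, with $0<\varepsilon<\tfrac14$ and $c>2/\varepsilon$: the four inclusions of Condition (iv) hold by a short computation, and taking $g'=\left(\begin{smallmatrix}1&0\\-c&1\end{smallmatrix}\right)$ with $U_2^{\pm}$ interchanged handles the case where $g$ lies in the same parabolic class as $f$. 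Finally, note that the proposition is reachable from the paper's example with no new dynamics at all, which is presumably the intended argument: the displayed pair $\left(\begin{smallmatrix}0&-1\\1&-s\end{smallmatrix}\right)$, $\left(\begin{smallmatrix}0&-1\\1&t\end{smallmatrix}\right)$ realizes every pair $s,t\geq 2$, hence every pair of non-elliptic traces up to sign; hyperbolic classes in $\SL_2(\RR)$ are determined by the trace, the signs of the generators can be changed at will because $a_iA=-a_iA$ (so the freeness is really freeness of the images in $\mathsf{PSL}_2(\RR)$, and $-I$ is central), and replacing a generator by its inverse interchanges the two parabolic classes of a given trace. So the paper's route is pure conjugacy-class bookkeeping on top of the already verified configuration, whereas your route still owes its hardest computations.
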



\section{Proof of Theorem~\ref{t:main}}

\subsection{Condition (i) implies Condition (ii)}

\begin{proof}[Proof of Theorem~\ref{t:main}, (i) implies (ii)]
Let $F=\langle a,b \rangle$ be freely generated by $a$ and $b$, and $A'$ be the subset of $F$ consisting of the elements of $F$
that, written in freely reduced form in terms of $a$ and $b$, start either with $a$ or with $a^{-1}$ (note that the empty word is not in $A'$). Then
\begin{itemize}
 \item[] $F = A' \cup aA'$, and
 \item[] $A'$, $bA'$, and $b^2A'$ are disjoint.
\end{itemize}

Choose a point $z$ in $Z$. Since the action of $F$ is properly discontinuous, we may choose an open neighborhood $U$ of $z$ such that $(F\setminus 1)U \cap U = \emptyset$. Note that in case of a discrete space $X$, the set $U$ may be chosen to be the singleton $\{z\}$, or any set of points $U$, $U \subseteq Z$, such that $z \in U$ and that no two points in $U$ come from the same $F$-orbit (in other words, $U$ is a partial system of orbit representatives). The set $Y=FU$ is nonempty, open, $F$-invariant subset of $Z$.

Let
\[
 A = A'U.
\]
The set $A$ is a nonempty, open subset of $Y$ and
\[
 A \cup aA = A'U \cup aA'U = (A' \cup aA')U = FU = Y.
\]

We claim that, for any $h_1,h_2$ in $F$ such that $h_1A' \cap h_2A' = \emptyset$,
\[
 h_1A \cap h_2A = \emptyset.
\]
Indeed,
\begin{align*}
 h_1A \cap h_2A &= h_1A'U \cap h_2A'U =
 \bigcup_{u,v \in U} (h_1A'u \cap h_2A'v) = \\
 &\stackrel{(*)}{=} \bigcup_{u \in U} (h_1A'u \cap h_2A'u)\stackrel{(**)}{=} \bigcup_{u \in U} (h_1A' \cap h_2A')u = \emptyset,
\end{align*}
where equality~(*) is due to the fact that $h_1A'$ and $h_2A'$ are subsets of $F$ (which implies that $h_1A'u \cap h_2A'v \subseteq Fu \cap
Fv = \emptyset$, when $u \neq v$), and equality~(**) is due to the fact that the action of $F$ on $Z$ is free (recall Remark~\ref{r:properly}).

Therefore, since $A'$, $bA'$, and $b^2A'$ are disjoint subsets of $F$, the open sets $A$, $bA$ and $b^2A$ are disjoint subsets of $Y$ and Condition (ii) is satisfied for $a_0=1$, $a_1=b$, $a_2=b^2$.
\end{proof}


\subsection{Condition (ii) implies Condition (iii)}\label{ss:23}

We first need the following Inequality Lemma.

\begin{lemma}[Inequality Lemma]
Let $G \acts X$ be a left group action, $A$ and $Y$ be subsets of $X$, with $A \subseteq Y \subseteq X$, and, for some $n \geq 3$, let
$a,a_0,\dots,a_{n-1}$ be elements of $G$ such that
\begin{itemize}
 \item[] $Y = A \cup a A$,
 \item[] $a_0A,a_1A,\dots,a_{n-1}A$ are disjoint,
 \item[] $Y$ is invariant under $a,a_0,\dots,a_{n-1}$.
\end{itemize}
Then, for all $i,j,\ell$ in $\{0,\dots,n-1\}$, if $i \neq j$,
\[
  a_\ell a^{-1}a_j^{-1}(a_iA) \subseteq a_\ell A \qquad \text{and} \qquad
  a_\ell aa_j^{-1}(a_iA) \subseteq a_\ell A.
\]
\end{lemma}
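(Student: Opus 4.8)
The plan is to peel off the outer elements and reduce everything to one set-theoretic observation about where $a_j^{-1}(a_iA)$ sits inside $Y$. Since a group acts by bijections, applying $a_\ell^{-1}$ to the two desired inclusions shows it is enough to prove
\[
 a^{-1}a_j^{-1}(a_iA)\subseteq A \qqand aa_j^{-1}(a_iA)\subseteq A ;
\]
then applying $a$ to the first inclusion and $a^{-1}$ to the second shows it is enough to prove
\[
 a_j^{-1}(a_iA)\subseteq aA \qqand a_j^{-1}(a_iA)\subseteq a^{-1}A .
\]
Thus $a_\ell$ is only a harmless final translation, and the whole lemma comes down to showing that $a_j^{-1}(a_iA)$ is contained in $aA$ and in $a^{-1}A$.

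Next I would record two facts. First, $a_j^{-1}(a_iA)\subseteq Y$: indeed $a_iY=Y$ and $A\subseteq Y$ give $a_iA\subseteq Y$, while $a_jY=Y$ gives $a_j^{-1}Y=Y$, so $a_j^{-1}(a_iA)\subseteq a_j^{-1}Y=Y$. Second, $a_j^{-1}(a_iA)\cap A=\emptyset$: since $i\neq j$, the disjointness hypothesis gives $a_iA\cap a_jA=\emptyset$, and applying the bijection $a_j^{-1}$ transports this to $a_j^{-1}(a_iA)\cap a_j^{-1}(a_jA)=a_j^{-1}(a_iA)\cap A=\emptyset$.

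Finally I would combine these with the two decompositions of $Y$. We are given $Y=A\cup aA$, and since $aY=Y$ we also have $Y=a^{-1}Y=A\cup a^{-1}A$. Because $a_j^{-1}(a_iA)$ is contained in $Y$ but disjoint from $A$, it must be contained in $aA$ (using $Y=A\cup aA$) and also in $a^{-1}A$ (using $Y=A\cup a^{-1}A$). This is exactly what the first step reduced the lemma to, so we are done.

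I do not expect a genuine obstacle here: the argument is a short chain of translations together with one use of the disjointness hypothesis. The only points requiring care are that every step of the form $S\subseteq T\iff hS\subseteq hT$ and $S\cap T=\emptyset\Rightarrow hS\cap hT=\emptyset$ is legitimate precisely because each $h$ acts bijectively, and that the invariance hypotheses must be used as the equalities $hY=Y$ of Remark~\ref{r:invariant}, not as mere inclusions — in particular to obtain the auxiliary decomposition $Y=A\cup a^{-1}A$. Note that invariance of $Y$ under $a_\ell$ is not actually needed for this lemma, only invariance under $a$, $a_i$, and $a_j$.
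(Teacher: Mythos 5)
Your proof is correct and is essentially the paper's argument: the paper multiplies $Y=A\cup aA$ on the left by $a_j$ and uses disjointness of $a_iA$ from $a_jA$ to conclude $a_iA\subseteq a_jaA$, which is your argument transported by $a_j$, and it likewise handles the second inclusion via the equivalent decomposition $Y=A\cup a^{-1}A$. Your added observations (reduction showing $a_\ell$ is a harmless translation, and that invariance under $a_\ell$ is not needed) are accurate but do not change the substance.
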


\begin{proof}
Multiplying the equality $Y = A \cup aA$ by $a_j$ on the left yields
\[
 Y = a_jA \cup a_jaA,
\]
and since $a_iA$ is disjoint from $a_jA$ it follows that $a_iA \subseteq a_jaA$ or, equivalently,
\[ a_\ell a^{-1}a_j^{-1}(a_iA) \subseteq a_\ell A. \]

Since $Y=A \cup aA$ is equivalent to $Y=A \cup a^{-1}A$ we see that, by symmetry, the inequality obtained by replacing $a^{-1}$ by $a$ is also valid.
\end{proof}

\begin{proof}[Proof of Theorem~\ref{t:main}, (ii) implies (iii)]
Assume Condition (ii) holds.

It is sufficient to show that if the statement that there are $n$ elements $a_0,a_1,\dots,a_{n-1}$ such that
\begin{itemize}
 \item[] $a_0A,a_1A,\dots,a_{n-1}A$ are disjoint, and
 \item[] $Y$ is invariant under $a_0,\dots,a_{n-1}$,
\end{itemize}
is valid for some $n \geq 3$, then it is valid for $n+1$ as well.

Assume that the statement is valid for some $n \geq 3$. The Inequality Lemma implies that
\[
 a_{n-1}aa_{n-1}^{-1}(a_0A) \subseteq a_{n-1}A \qquad \text{and} \qquad a_{n-1}aa_{n-1}^{-1}(a_1A) \subseteq a_{n-1}A.
\]
Since $a_0A$ and $a_1A$ are disjoint, so are $a_{n-1}aa_{n-1}^{-1}(a_0A)$ and $a_{n-1}aa_{n-1}^{-1}(a_1A)$, and since they are both in $a_{n-1}A$, they are also disjoint from $a_0A,a_1A,\dots,a_{n-2}A$. Therefore
\[ a_0A,~a_1A,~\dots,~a_{n-2}A,~a_{n-1}aa_{n-1}^{-1}a_0A,~a_{n-1}aa_{n-1}^{-1}a_1A
\]
are $n+1$ disjoint translates of $A$.
\end{proof}


\subsection{Condition (iii) implies Condition (iv)}\label{ss:34}

\begin{proof}[Proof of Theorem~\ref{t:main}, (iii) implies (iv)]
Let $A$ and $Y$ be subsets of $X$, with $A \subseteq Y \subseteq X$, such that, there exist elements
$a,a_0,a_1,a_2,a_3,a_4$ in $G$ such that
\begin{itemize}
 \item[] $Y = A \cup aA$,
 \item[] $a_0A,a_1A,a_2A,a_3A,a_4A$ are disjoint,
 \item[] $Y$ is invariant under $a,a_0,a_1,a_2,a_3,a_4$.
\end{itemize}

It follows from the Inequality Lemma that
\begin{alignat*}{2}
 & a_1aa_2^{-1}(a_0A \cup a_1A \cup a_3A \cup a_4A) \subseteq a_1A, \qquad &&(a_1aa_2^{-1})^{-1}(a_0A \cup a_2A \cup a_3A \cup a_4A) \subseteq a_2A \\
 & a_3aa_4^{-1}(a_0A \cup a_3A \cup a_1A \cup a_2A) \subseteq a_3A, \qquad &&(a_3aa_4^{-1})^{-1}(a_0A \cup a_4A \cup a_1A \cup a_2A) \subseteq a_4A
\end{alignat*}
Therefore, Condition (iv) is satisfied for $f_1=a_1aa_2^{-1}$, $f_2=a_3aa_4^{-1}$, $U_1^+ = a_1A$, $U_1^- = a_2A$, $U_2^+ = a_3A$, $U_2^- = a_4A$,
and $U_0 = a_0A$.
\end{proof}


\subsection{Wrap up of the Proof of Theorem~\ref{t:main}}

Of course, the implication (iv)$\implies$(i) is well known. Moreover, an analogous condition with $k$ elements $f_1,\dots,f_k$ and $2k+1$ sets $U_0,U_1^\pm,\dots,U_k^\pm$ with appropriate attracting properties ensures that $F=\langle f_1,\dots,f_k \rangle$ is a free subgroup of rank $k$ acting properly discontinuously on $FU_0$.

It remains to track down explicit generators for a free group under Condition (ii) and Condition (iii).

If Condition (iii) is satisfied, with $n=5$, then by the proof of (iii)$\implies$(iv) in Subsection~\ref{ss:34}, $a_1aa_2^{-1}$ and $a_3aa_4^{-1}$ freely generate a quasi-Schottky free subgroup $F$ of rank 2 acting properly discontinuously on $Fa_0A$. More generally, if Condition (iii) is satisfied with $n \geq 2k+1$, then $F=\langle a_1aa_2^{-1},\dots,a_{2k-1}aa_{2k}^{-1}\rangle$ is a free subgroup of rank 2 acting properly discontinuously on $Fa_0A$.

If Condition (iii) is satisfied, with $n=4$, then by the Inequality Lemma and Klein Ping-Pong Lemma with table halves $a_2A \cup a_3A$ and $a_0A \cup a_1A$, the elements $a_0aa_1^{-1}$ and $a_2aa_3^{-1}$ freely generate a copy of $F_2$.

If Condition (ii) is satisfied, then, by the proof of (ii)$\implies$(iii) in Subsection~\ref{ss:23}, Condition (iii) is satisfied for $n=4$, and the roles of $a_0$, $a_1$, $a_2$ and $a_3$ in Condition (iii) may be played by $a_1$, $a_2$, $a_0a^{-1}a_0^{-1}a_1$, and $a_0a^{-1}a_0^{-1}a_2$. Then, by using the explicit generators already established when Condition (iii) is satisfied (see the previous paragraph), $a_0a^{-1}a_0^{-1}a_1aa_1^{-1}$ and $a_0a^{-1}a_0^{-1}a_2aa_2^{-1}$ freely generate a copy of $F_2$. After conjugation by $a_0$, we see that $[a,a_1^{-1}a_0]$ and $[a,a_2^{-1}a_0]$ also generate a copy of $F_2$.

\subsection{Proof of Corollary~\ref{c:f2}}

\begin{proof}[Proof of Corollary~\ref{c:f2}]
Assume that $G$ contains a copy $F$ of the free group of rank 2, and let $F=\langle a,b\rangle$. Consider $G$ as a discrete space and the left regular action $F \acts G$ by left multiplication. This action is free. At this point we are in the setup of the proof of Theorem~\ref{t:main}, (i)$\implies$(ii), with $G$ in the role of $Z$. If we let any right transversal $T$ of $F$ in $G$ play the role of the set $U$ (recall, from the proof of Theorem~\ref{t:main}, (i)$\implies$(ii), that any partial system of $F$-orbit representatives may play this role), the proof shows that the role of $Y$ is played by $FU=FT=G$. Therefore $G = A \cup aA$, and $A$, $bA$, and $cA$ are disjoint, where $A=A'T$, and $A'$ is the set of nontrivial elements in the free group $F$ that, in their reduced form, start in $a$ or in $a^{-1}$.

The other direction follows directly from Theorem~\ref{t:main}, (ii)$\implies$(i), by considering the left regular action of $G$ on itself.
\end{proof}


\subsection*{Acknowledgments}

I would like to thank Pierre de la Harpe, for sharing his thoughts and enthusiasm for the subject through our conversations and correspondence, and the referee, for suggesting several improvements.


\def\cprime{$'$}


\begin{thebibliography}{BFH05}

\bibitem[Ben97]{bennett:free}
Curtis~D. Bennett.
\newblock Explicit free subgroups of {${\rm Aut}({\bf R},\leq)$}.
\newblock {\em Proc. Amer. Math. Soc.}, 125(5):1305--1308, 1997.

\bibitem[BFH00]{bestvina-f-h:ta-outer1}
Mladen Bestvina, Mark Feighn, and Michael Handel.
\newblock The {T}its alternative for {${\rm Out}(F\sb n)$}. {I}. {D}ynamics of
  exponentially-growing automorphisms.
\newblock {\em Ann. of Math. (2)}, 151(2):517--623, 2000.

\bibitem[BFH05]{bestvina-f-h:ta-outer2}
Mladen Bestvina, Mark Feighn, and Michael Handel.
\newblock The {T}its alternative for {${\rm Out}(F\sb n)$}. {II}. {A} {K}olchin
  type theorem.
\newblock {\em Ann. of Math. (2)}, 161(1):1--59, 2005.

\bibitem[BS85]{brin-s:nofree}
Matthew~G. Brin and Craig~C. Squier.
\newblock Groups of piecewise linear homeomorphisms of the real line.
\newblock {\em Invent. Math.}, 79(3):485--498, 1985.

\bibitem[dlH00]{harpe:ggt}
Pierre de~la Harpe.
\newblock {\em Topics in geometric group theory}.
\newblock Chicago Lectures in Mathematics. University of Chicago Press,
  Chicago, IL, 2000.

\bibitem[Iva84]{ivanov:ta-mapping}
N.~V. Ivanov.
\newblock Algebraic properties of the {T}eichm\"uller modular group.
\newblock {\em Dokl. Akad. Nauk SSSR}, 275(4):786--789, 1984.

\bibitem[Mar00]{margulis:ghys-circle}
Gregory Margulis.
\newblock Free subgroups of the homeomorphism group of the circle.
\newblock {\em C. R. Acad. Sci. Paris S\'er. I Math.}, 331(9):669--674, 2000.

\bibitem[McC85]{mccarthy:ta-mapping}
John McCarthy.
\newblock A ``{T}its-alternative'' for subgroups of surface mapping class
  groups.
\newblock {\em Trans. Amer. Math. Soc.}, 291(2):583--612, 1985.

\bibitem[Mun00]{munkres:topology2nd}
James~R. Munkres.
\newblock {\em Topology}.
\newblock Prentice-Hall Inc., Englewood Cliffs, N.J., 2nd edition, 2000.

\bibitem[{\v S}un13]{sunic:free-lex}
Zoran {\v S}uni{\'c}.
\newblock Explicit left orders on free groups extending the lexicographic order
  on free monoids.
\newblock arXiv:1304.0837, 2013.

\bibitem[Tit72]{tits:alternative}
Jacques Tits.
\newblock Free subgroups in linear groups.
\newblock {\em J. Algebra}, 20:250--270, 1972.

\end{thebibliography}

\end{document}